\newtheorem{theorem}{Theorem}[section]
\newtheorem{lemma}[theorem]{Lemma}
\newtheorem{proposition}[theorem]{Proposition}
\newtheorem{varexample}[theorem]{Example}
\newtheorem{definition}[theorem]{Definition}
\newcommand{\gon}{\textrm{gon}}
\newenvironment{example}{\begin{varexample}
\begin{normalfont}}{\end{normalfont}
\end{varexample}}
\theoremstyle{definition}
\definecolor{gold}{rgb}{1.0,0.75,0.0}
\title{The Gonality of Queen's Graphs}
\author{Ralph Morrison and Noah Speeter}
\begin{document}

\maketitle

\begin{abstract}
    In this paper we study queen's graphs, which encode the moves by a queen on an $n\times m$ chess board, through the lens of chip-firing games.  We prove that their gonality is equal to $nm$ minus the independence number of the graph, and give a one-to-one correspondence between maximum independent sets and classes of positive rank divisors achieving gonality.  We also prove an identical result for toroidal queen's graphs.
\end{abstract}

\section{Introduction}

The Eight Queens Puzzle, dating back to at least 1848 \cite{first_puzzle}, asks whether eight queens may be placed on the squares of a standard \(8\times 8\) chessboard such that no queens may attack another (meaning that no two are in a common row or column, or on a common diagonal).  More generally, one can consider \(n\times n\) chessboards.  It was shown in \cite{first_solution1,first_solution2} that for \(n\geq 4\), there is indeed always a solution to the puzzle.  One can ask similar  question for an \(n\times n\) toroidal chessboard, which allow for more diagonal moves; the situation is more subtle here, with a solution of \(n\) queens precisely when \(\gcd(6,n)=1\) \cite{first_toroidal}.  We refer the reader to \cite{survey_queens} for an excellent survey of the history of the problem and the mathematical results that have been proved. 

It is natural to frame this puzzle in terms of graph theory.  Given \(n,m\geq 2\), we define the \(n\times m\) queen's graph \(Q_{n,m}\) to have \(nm\) vertices, organized into \(m\) columns of \(n\) vertices each and \(n\) rows of \(m\) vertices each, with two vertices connected precisely if they are in the same row or column, or lie along a diagonal of slope \(\pm 1\).  We similarly define the \(n\times m\) toroidal queen's graph \(TQ_{n,m}\), where diagonal moves are allowed across the boundary of a torus.  The graphs \(Q_{3, 3}\) and \(TQ_{3,3}\) are illustrated in Figure \ref{figure:lambda_3_case}. 
The placement of many mutually non-attacking queens translates to a question of the \emph{independence number} of a graph $G$, denoted $\alpha(G)$, defined as the largest possible size of a set of vertices with no two sharing an edge. In particular, the Eight Queens Puzzle is equivalent to asking whether $\alpha(Q_{8,8})\geq 8$.

\begin{figure}[hbt]
    \centering
    \includegraphics[scale=1]{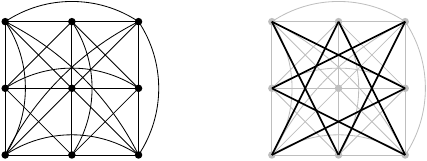}
    \caption{The graph \(Q_{3,3}\) on the left and \(TQ_{3,3}\) on the right; the edges in \(TQ_{3,3}\) not present in \(Q_{3,3}\) are highlighted.}
    \label{figure:lambda_3_case}
\end{figure}

In this paper we study queen's graphs from the perspective of chip-firing games on graphs.  In particular, we study the \emph{gonality} of these graphs, which measures the smallest number of chips that can be placed on a graph so that any added debt can be eliminated through chip-firing moves.  For \(Q_{n,m}\), we obtain the following complete answer in terms of the independence number \(\alpha(G)\).

\begin{theorem}\label{theorem:queens}
Let \(m\geq n\geq 2\).  We have \(\textrm{gon}(Q_{n,m})=nm-\alpha(Q_{n,m})\).  More explicitly,
\[\textrm{gon}(Q_{n,m})=\begin{cases}3&\textrm{ if }n=m=2\\7&\textrm{ if }n=m=3\\n(m-1)&\textrm{ otherwise. }\end{cases}\]
Moreover, there is a one-to-one correspondence between divisor classes \([D]\) on \(Q_{n,m}\) of rank \(1\) and degree \(\gon(Q_{n,m})\), and maximum independent sets on \(Q_{n,m}\).
\end{theorem}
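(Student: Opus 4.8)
The plan is to prove the gonality formula by matching upper and lower bounds, and then to read off the bijection from the tightness of the lower bound argument.

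For the upper bound I would use the general fact that $\gon(G)\le |V(G)|-\alpha(G)$ for any connected graph $G$. Given a maximum independent set $S$, the divisor $D_S:=\sum_{v\notin S}v$ has degree $mn-\alpha(Q_{m,n})$, and it has positive rank: for any target vertex $w$, if $w\notin S$ then $D_S\ge w$ already, and if $w\in S$ we fire the set $V\smallsetminus\{w\}$, so that every vertex of $V\smallsetminus S$ loses at most one chip (it has at most one edge to $w$) and hence stays effective, every other vertex of $S$ is untouched (it has no edge to $w$ since $S$ is independent), and $w$ gains $\deg(w)\ge 1$ chips. Thus $\gon(Q_{m,n})\le mn-\alpha(Q_{m,n})$, and the explicit formula follows once we insert the known values of $\alpha(Q_{m,n})$: namely $\alpha(Q_{2,2})=1$, $\alpha(Q_{3,3})=2$, and $\alpha(Q_{m,n})=n$ in every remaining case with $m\ge n$ (one can always place $n$ mutually non-attacking queens on an $m\times n$ board).

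The heart of the matter is the lower bound $\gon(Q_{m,n})\ge mn-\alpha(Q_{m,n})$, i.e.\ that every effective divisor $D$ of positive rank has $\deg D\ge mn-\alpha(Q_{m,n})$. My strategy would be to show that the class of $D$ has an effective representative $D'$ whose empty set $Z_{D'}=\{v:D'(v)=0\}$ is independent; then $\deg D=\deg D'\ge |\operatorname{supp}(D')|=mn-|Z_{D'}|\ge mn-\alpha(Q_{m,n})$. To produce such a $D'$ I would take $D$ to be $q$-reduced for a convenient $q$ (so positive rank forces a chip at $q$) and argue with Dhar's burning algorithm: if two adjacent vertices were both empty, the high connectivity of $Q_{m,n}$ — every vertex lies in a row-clique $K_m$ with $m\ge 3$, and any two rows are joined by a full matching of column edges together with diagonal edges — forces the fire to spread in a way that yields a strictly "more spread out" equivalent effective divisor; iterating this terminates at a $D'$ with independent $Z_{D'}$. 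I expect this to be the main obstacle: controlling the burning and firing so that eliminating one bad adjacency does not create another, which is precisely where the density of the queen's graph (as opposed to, say, a long path, for which the analogous formula fails) is indispensable. The exceptional graphs $Q_{2,2}=K_4$ and $Q_{3,3}$ would be dealt with directly.

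Finally, for the one-to-one correspondence: $S\mapsto[D_S]$ lands among the rank-$1$ degree-$\gon(Q_{m,n})$ classes, since $D_S$ has positive rank as above and has rank exactly $1$ (a divisor of rank $\ge 2$ and degree $\gon(Q_{m,n})$ would, after subtracting a point, give a positive-rank divisor of strictly smaller degree, contradicting the definition of gonality). Surjectivity comes from the equality case of the lower bound: if $[D]$ has rank $1$ and degree $mn-\alpha(Q_{m,n})$, the chain of inequalities above must be equalities, so the representative $D'$ produced there is multiplicity-free with $Z_{D'}$ an independent set of size $\alpha(Q_{m,n})$, whence $D'=\sum_{v\notin Z_{D'}}v=D_{Z_{D'}}$ and $[D]=[D_S]$ for the maximum independent set $S=Z_{D'}$. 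For injectivity I would show that $D_S\sim D_{S'}$ forces the function $f$ with $\operatorname{div}(f)=D_{S'}-D_S=\sum_{v\in S}v-\sum_{v\in S'}v$ to be constant: on the top level set $A=f^{-1}(\max f)$, every boundary vertex of $A$ must lie in $S$ and have exactly one neighbor outside $A$; but in $Q_{m,n}$ an independent set is far too sparse to cut off a proper nonempty subset in this way, since such a boundary vertex in a row-clique $R_i$ would force almost all of $R_i$, hence its columns, hence all rows, into $A$ until $A=V$. Therefore $f$ is constant and $S=S'$.
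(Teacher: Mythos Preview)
Your upper bound and the deduction of the explicit values from $\alpha(Q_{m,n})$ match the paper exactly.  The lower bound, however, is where essentially all of the content lies, and your proposal does not yet contain the ideas that make it work.  Saying that two adjacent empty vertices let the fire ``spread to a more spread-out equivalent divisor'' and then iterating is not a proof: you have not specified the monotone quantity that guarantees termination, nor shown that a single Dhar step actually improves it.  The paper's argument is organised quite differently.  It fixes a specific normal form---a \emph{row-equitable} representative, one maximising the number of chips in its poorest row among all equivalent effective divisors---and then proves two technical facts: (i) for such a $D$ one can always choose $q$ so that Dhar's algorithm burns an entire poorest row (this is a genuine case analysis using Lemma~\ref{lemma:burning_complete} on the row and column cliques); and (ii) once a full row has burned but the whole graph has not, the cut $E(U,U^c)$ between burnt and unburnt parts has at least $(n-1)m+2(m-n+1)=(m-1)n+2$ edges (using the rook's-graph cut bound plus the diagonal edges between two rows), forcing $\deg D\ge (m-1)n+2$ and contradicting the upper bound.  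It is this cut-set count, together with the row-equitable choice that rules out the ``every row partially burns'' scenario, that your sketch is missing.

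For the bijection, your surjectivity argument is the same as the paper's.  For injectivity the paper takes a shorter route than your level-set analysis: it observes that $\mathbb{1}_{S^c}$ is $q$-reduced for any $q\in S^c$ (a quick Dhar check), picks $q\in S^c\cap T^c$, and invokes uniqueness of $q$-reduced representatives to conclude $\mathbb{1}_{S^c}=\mathbb{1}_{T^c}$.  Your outdegree-one boundary argument is in the right spirit but, as written, the step ``almost all of $R_i$, hence its columns, hence all rows'' still needs justification; the $q$-reduced approach avoids this entirely.
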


The idea behind this proof is that on any graph \(G\), any independent set \(S\) of maximum size gives a winning strategy with \(|V(G)|-|S|\) chips.  For queen's graphs, it turns out that not only is this strategy optimal; every optimal strategy is of this form, and no two maximum independent sets give chip-firing-equivalent strategies.

For toroidal queen's graphs, we find a similar result.

\begin{theorem}\label{theorem:toroidal}
We have \(\gon(TQ_{n,m})= nm-\alpha(TQ_{n,m})\).  In particular, for \(n=m\), we have 
\[\gon(TQ_{n,n})= \begin{cases}n(n-1)&\textrm{ if }\gcd(6,n)=1\\n(n-1)+1&\textrm{ if }3\nmid n \textrm{ and } 4\nmid n\\n(n-1)+2&\textrm{ in all other cases,}\end{cases}\]
and \(\gon(TQ_{n,m})=nm-\gcd(n,m)\) for \(n\neq m\).  Moreover,  there is a one-to-one correspondence between divisor classes \([D]\) on \(Q_{n,m}\) of rank \(1\) and degree \(\gon(TQ_{n,m})\), and maximum independent sets on \(TQ_{n,m}\).
\end{theorem}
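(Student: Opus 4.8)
The plan is to run the same two-sided argument that yields Theorem~\ref{theorem:queens}, reading ``toroidal queen attack'' for ``queen attack'' throughout. Recall that on any graph $G$ an independent set $S$ produces a positive-rank divisor $D_S := \sum_{v \notin S}(v)$: a chip can be moved onto any $w \notin S$ for free, and onto any $w \in S$ by firing $V(G) \setminus \{w\}$, which dumps $\deg(w)$ chips on $w$ while each other vertex of $S$ (being nonadjacent to $w$) loses nothing and each vertex outside $S$ loses at most its single chip. Hence $\gon(G) \le |V(G)| - \alpha(G)$, and in particular $\gon(TQ_{m,n}) \le mn - \alpha(TQ_{m,n})$. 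To extract the explicit formulas I would substitute the known independence numbers: when $m \ne n$ one has $\alpha(TQ_{m,n}) = \gcd(m,n)$, where in the degenerate subcase $\gcd(m,n)=1$ each of the two diagonals is a single Hamiltonian cycle, so $TQ_{m,n} = K_{mn}$ and the bound is sharp by the known gonality of complete graphs; when $m = n$ the values $n$, $n-1$, $n-2$ are exactly the maximum sizes of a set of mutually non-attacking queens on the $n\times n$ torus, the classical toroidal queens count (see \cite{first_toroidal}).

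For the matching lower bound I would show that if $D$ is effective with $r(D) \ge 1$ then $\deg(D) \ge mn - \alpha(TQ_{m,n})$. Following the $Q_{m,n}$ case, the idea is to pass to a well-chosen representative $D' \sim D$ (reduced with respect to a carefully selected vertex) and run Dhar's burning algorithm: because every vertex of a queen's graph lies on a full row, a full column, and two diagonal cycles, any small set of vertices has many edges leaving it, so the fire cannot stall except on a very sparse region, and positive rank then forces the set of chip-free vertices of $D'$ to be an independent set. That gives $\deg(D') = mn - \#\{\text{chip-free vertices}\} \ge mn - \alpha(TQ_{m,n})$. The only genuine change from the planar setting is the diagonal geometry: a toroidal diagonal is a disjoint union of $\gcd(m,n)$ cycles rather than a single path, so the edge-count estimates powering the burning argument must be redone in that language; on the other hand, vertex-transitivity of $TQ_{m,n}$ eliminates the corner and boundary bookkeeping that $Q_{m,n}$ requires.

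For the bijection, the forward map sends a maximum independent set $S$ to the class $[D_S]$. Its degree is $mn - \alpha(TQ_{m,n}) = \gon(TQ_{m,n})$, and its rank is exactly $1$, since if it were at least $2$ then $D_S - (v)$ would have positive rank and degree below the gonality. Injectivity is the statement that distinct maximum independent sets give non-equivalent divisors, which I would verify as in Theorem~\ref{theorem:queens} by comparing $q$-reduced forms for $q$ in the symmetric difference of the two sets and invoking uniqueness of reduced divisors. Surjectivity is the equality case of the lower-bound argument: given a class of rank $1$ and degree $\gon(TQ_{m,n})$, the analysis above shows some representative equals $D_S$ for an independent set $S$, and $\deg(D_S) = mn - |S|$ being forced to equal $mn - \alpha(TQ_{m,n})$ makes $S$ maximum.

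The hard part is the lower bound — precisely, the burning and edge-counting estimate that pins the chip-free locus of a positive-rank divisor down to an independent set, now carried out with the $\gcd(m,n)$ diagonal cycles of the torus — together with its equality version needed for surjectivity of the bijection. Establishing the exact constants $n$, $n-1$, $n-2$ in the square case, i.e. computing $\alpha(TQ_{n,n})$ on the nose, is a separate necessary ingredient that I would import from the literature on non-attacking queens on toroidal boards.
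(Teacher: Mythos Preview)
Your outline matches the paper's strategy at the architectural level: upper bound from Lemma~\ref{lemma:upper_bound_alpha}, lower bound via a burning argument that forces the zero locus of a suitable representative to be independent, then the bijection $S\mapsto[\mathbb{1}_{S^C}]$ with injectivity from uniqueness of reduced divisors and surjectivity from the equality case. The explicit constants are likewise imported from the literature on $\alpha(TQ_{m,n})$, exactly as you say.

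Where your plan diverges from the paper, and where I think there is a real gap, is in the choice of normalized representative. You propose to work with a $q$-reduced divisor for a ``carefully selected vertex''. The paper does \emph{not} do this; it introduces the notion of a \emph{row-equitable} divisor---an effective representative that, among all equivalent effective divisors, maximizes the number of chips on its poorest row. This is the hinge on which the whole burning argument turns. With a row-equitable $D$ one argues in two steps: first (Lemma~\ref{lemma:deg(D)_bound_toroidal}) if the burning process ever consumes an entire poorest row, then either the whole graph burns or the cut between burnt and unburnt rows already forces $\deg(D)\ge mn$, using the count of $m^2/\gcd(m,n)$ positive-diagonal edges between any two rows; second (Proposition~\ref{prop:independent_sets_toroidal}) if $D$ is row-equitable of degree $\gon(TQ_{m,n})$ and is \emph{not} of the form $\mathbb{1}_{S^C}$, then one can always choose a starting vertex so that some poorest row burns, by a case analysis on whether a poorest row has its $m-1$ chips concentrated on one vertex or spread one-per-vertex. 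The row-equitable hypothesis is used repeatedly here to bound the chip count on every row and to rule out the possibility that firing the unburnt set improves the poorest row. A bare $q$-reduced hypothesis does not give you this row-by-row control, so the burning argument as you sketch it (``any small set has many edges leaving it'') does not obviously go through.

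One minor point on injectivity: you want $q$ in $S^C\cap T^C$, not in the symmetric difference of $S$ and $T$. The paper checks that $\mathbb{1}_{S^C}$ is $q$-reduced precisely for $q\in S^C$, so to compare $\mathbb{1}_{S^C}$ and $\mathbb{1}_{T^C}$ as $q$-reduced divisors you need $q$ lying in both complements.
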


Our paper is organized as follows.  In Section \ref{section:background} we present background results on graph theory and queen's graphs, as well as the definitions necessary for chip-firing games.  In Section \ref{section:queens} we prove Theorem \ref{theorem:queens}, and in Section \ref{section:toroidal_queens} we prove Theorem \ref{theorem:toroidal}.

\medskip

\textbf{Acknowledgements.}  The authors thank Sasha Kononova and anonymous reviewers for helpful comments on earlier drafts of this work. The first author was supported by NSF Grant DMS-2011743.

\section{Graphs and chip-firing games}
\label{section:background}
\subsection{Graphs}
For this paper we assume all graphs are connected and without loops. Given a graph $G$ we denote the set of its vertices with $V(G)$ and the set of its edges with $E(G)$. Given a subset $U\subseteq V(G)$, we denote its complement as $U^c$. 

Given a vertex $v\in V(G)$, we say the \emph{degree} of $v$, $\textit{deg}(v)$ is the number of edges adjacent to $v$. The minimal degree of a graph $G$ is denoted $\delta(G)=\min\{\textit{deg}(v_i)|v_i\in V(G)\}$.

Given a graph $G$, we say $A\subseteq V(G)$ is \emph{independent} if for all $v_i, v_j \in A$, $v_i$ and $v_j$ are not adjacent. The \emph{independence number} of $G$, denoted $\alpha(G)$, is the maximum possible size of an independent set on $G$.  We recall several previous results regarding the independence numbers of queen's graphs and their variants.

\begin{lemma} We have \(\alpha(Q_{2,2})=1\), \(\alpha(Q_{3,3})=2\), and \(\alpha(Q_{n,n})=n\) for \(n\geq 4\).  For \(n\neq m\), \(\alpha(Q_{n,m})=\min\{n,m\}\).
\end{lemma}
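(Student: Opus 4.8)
The plan is to split the statement into a uniform upper bound and a handful of lower-bound constructions. Since transposing the board gives an isomorphism $Q_{m,n}\cong Q_{n,m}$, I may assume $n\le m$, so that $\min\{m,n\}=n$ is the number of rows. No two non-attacking queens occupy a common row, so any independent set meets each of the $n$ rows at most once; hence $\alpha(Q_{m,n})\le n$ for all $m,n\ge 2$. In particular $\alpha(Q_{2,2})\le 1$, and since any two cells of the $2\times 2$ board share a row, column, or diagonal, $Q_{2,2}\cong K_4$ and $\alpha(Q_{2,2})=1$; the inequality ``$\le$'' is then in hand for every remaining case.

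For $n\ge 4$ the matching lower bound is exactly the classical $n$-queens theorem \cite{first_solution1,first_solution2}: for each $n\ge 4$ there is a placement of $n$ mutually non-attacking queens on the $n\times n$ board, i.e.\ an independent set of size $n$ in $Q_{n,n}$, so $\alpha(Q_{n,n})=n$. When $m>n\ge 4$, placing such a configuration in the first $n$ of the $m$ columns creates no new row, column, or diagonal coincidence among the occupied cells (there are no wrap-around diagonals in $Q_{m,n}$), so it remains independent and $\alpha(Q_{m,n})=n=\min\{m,n\}$.

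It remains to treat $Q_{3,3}$ and the thin rectangles $Q_{m,2}$ with $m\ge 3$ and $Q_{m,3}$ with $m\ge 4$. For $Q_{3,3}$ the cells $(1,1)$ and $(3,2)$ are non-attacking, so $\alpha\ge 2$; an independent set of size $3$ would place one queen in each row and column, hence would correspond to one of the six permutations of $\{1,2,3\}$, each of which is quickly checked to put two queens on a common diagonal, so $\alpha(Q_{3,3})=2$. For $Q_{m,2}$ with $m\ge 3$ the cells $(1,1),(3,2)$ are non-attacking, and for $Q_{m,3}$ with $m\ge 4$ the cells $(2,1),(4,2),(1,3)$ are pairwise non-attacking; each configuration uses only columns present on the board, giving $\alpha=\min\{m,n\}$ in these cases as well.

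The only non-routine ingredient is the existence half of the $n$-queens theorem, which I would cite rather than reprove; everything else is the pigeonhole upper bound together with a few explicit small configurations. (For a self-contained argument one could instead substitute the standard residue-class constructions of $n$ non-attacking queens on the $n\times n$ board, but invoking the known result is cleaner and suffices for everything the later sections need.)
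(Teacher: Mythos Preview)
Your proof is correct and follows essentially the same approach as the paper: a pigeonhole upper bound of $\min\{m,n\}$, the cited $n$-queens theorem for $n\ge 4$, embedding square solutions into wider boards, and explicit small constructions for $Q_{2,2}$, $Q_{3,3}$, and the thin rectangles. The only cosmetic difference is that the paper phrases the lower bound for $Q_{m,2}$ and $Q_{m,3}$ via the subgraph inclusion $Q_{n+1,n}\subset Q_{m,n}$ rather than writing out coordinates, but this is the same idea.
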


\begin{proof}  The upper bound of \(\min\{n,m\}\) (whether \(n=m\) or not) follows from the fact that no row or column can have a pair of non-adjacent vertices.  The lower bound of \(n\) on \(\alpha(Q_{n,n})\) when \(n\geq 4\) is the work of \cite{first_solution1,first_solution2}, which gives  a lower bound of \(n\) on \(\alpha(Q_{n,m})\)  when \(4\leq n\leq m\).  The cases of \(Q_{2,2}\) and \(Q_{3,3}\) can be determined by brute force; and \(\alpha(Q_{2,3})=2\) and \(\alpha(Q_{3,4})=3\) by construction of explicit independent sets. Finally, for \(n\in \{2,3\}\) and \(n<m\), having \(Q_{n,n+1}\) as a subgraph of \(Q_{n,m}\) gives the desired lower bound.
\end{proof}

\begin{lemma}\cite{toroidal_square1,toroidal_square2}  We have \[\alpha(TQ_{n,n})= \begin{cases}n&\textrm{ if }\gcd(6,n)=1\\n-1&\textrm{ if }3\nmid n \textrm{ and } 4\nmid n\\n-2&\textrm{ in all other cases.}\end{cases}\]
\end{lemma}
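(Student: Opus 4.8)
The plan is to split the statement into an \emph{upper bound} (no independent set exceeds the claimed value) and a matching \emph{construction}, organizing each part by the divisibility of $n$ by $2$, $3$, and $4$; this is essentially the route of the cited works, and I sketch it here.

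For the upper bound, note first that each of the $n$ rows contains at most one vertex of an independent set, so $\alpha(TQ_{n,n})\le n$ unconditionally. The case $\gcd(6,n)=1$ rests on the classical fact that equality holds \emph{exactly} when $\gcd(n,6)=1$. To see this, encode an independent set of size $n$ as a permutation $\sigma$ of $\mathbb{Z}_n$ with a vertex at $(i,\sigma(i))$ for each $i$: the row and column conditions are automatic, and the two wrap-around diagonal conditions say precisely that $i\mapsto i+\sigma(i)$ and $i\mapsto i-\sigma(i)$ are both bijections of $\mathbb{Z}_n$. Summing the trivial identity over $i$ gives $2\sum_i i\equiv\sum_i i\pmod n$, hence $\tfrac{n(n-1)}{2}\equiv 0\pmod n$, which forces $n$ odd; summing squares and using $(i+\sigma(i))^2+(i-\sigma(i))^2=2i^2+2\sigma(i)^2$ gives $2\sum_i i^2\equiv 0\pmod n$, which for odd $n$ fails exactly when $3\mid n$. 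Thus a size-$n$ independent set forces $\gcd(n,6)=1$, and conversely $\sigma(i)=2i$ produces one when $\gcd(n,6)=1$.

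To get the sharper bounds I would run the same moment calculus on a hypothetical independent set of size $n-1$, which omits exactly one row $r$, one column $c$, one positive diagonal $p$, and one negative diagonal $q$. The first- and second-moment identities then become congruences modulo $n$ relating $r,c,p,q$ to $\tfrac{n(n-1)}{2}$ and $\sum_i i^2$; combined with a parity count of how many vertices lie on ``even'' versus ``odd'' diagonals (using $i+\sigma(i)\equiv i-\sigma(i)\pmod 2$), these are contradictory when $3\mid n$ or $4\mid n$, yielding $\alpha(TQ_{n,n})\le n-2$ in those cases, while no obstruction beyond $\alpha\le n-1$ survives when $n\equiv 2\pmod 4$ and $3\nmid n$. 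The matching lower bounds are explicit near-full placements: $n-1$ mutually non-attacking queens when $n\equiv 2\pmod 4$ and $3\nmid n$, and $n-2$ of them otherwise. Linear placements $\sigma(i)=ci$ are unavailable for even $n$, since $1-c$ and $1+c$ cannot both be units modulo an even number, so these constructions are built ad hoc, tailored to the residue of $n$ modulo a small number, or assembled from optimal placements on smaller moduli.

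The main obstacle is exactly this construction step: exhibiting independent sets of size $n-1$ and $n-2$ uniformly across all composite $n$ and checking that a placement occupying all but one or two rows meets none of the $4n$ lines (rows, columns, and the two wrap-around diagonal families). The upper-bound half is ``only'' careful modular arithmetic and bookkeeping, but the constructions have no single closed form, which is why this result is attributed to \cite{toroidal_square1,toroidal_square2}; I would follow their placements rather than reconstruct them.
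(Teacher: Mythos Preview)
The paper does not prove this lemma at all: it is stated with citations to \cite{toroidal_square1,toroidal_square2} and used as a black box, so there is no ``paper's own proof'' to compare against. Your sketch is a faithful outline of the classical route taken in those references---P\'olya's first- and second-moment argument over $\mathbb{Z}_n$ for the $\gcd(6,n)=1$ threshold, Monsky's refinement for the $n-1$ and $n-2$ cases, and ad hoc constructions for the lower bounds---and you correctly flag that the explicit placements in the composite cases are the part with no clean closed form. Nothing more is required here; the authors are simply importing the result.
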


\begin{lemma}[\cite{non_square_tori}] We have \(\alpha(TQ_{n,m})=\gcd(n,m)\) for \(n\neq m\).
\end{lemma}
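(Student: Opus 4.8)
\emph{Proof plan.} Write $d=\gcd(m,n)$ and identify $V(TQ_{m,n})$ with $\mathbb Z_m\times\mathbb Z_n$, so that columns and rows are the fibres of the two coordinate projections, while the slope-$+1$ and slope-$-1$ diagonals are, respectively, the cosets of the cyclic subgroups generated by $(1,1)$ and by $(1,-1)$. I would prove the claimed equality by establishing the inequalities $\alpha(TQ_{m,n})\le d$ and $\alpha(TQ_{m,n})\ge d$ separately.

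For the upper bound the plan is a clique-cover argument. The orbits of the shift $(x,y)\mapsto(x+1,y+1)$ on $\mathbb Z_m\times\mathbb Z_n$ are exactly the maximal slope-$+1$ diagonals of the toroidal board; each has $\operatorname{lcm}(m,n)$ cells, so there are $mn/\operatorname{lcm}(m,n)=d$ of them, and together they partition $V(TQ_{m,n})$. Since a queen attacks every other cell lying on its diagonal, each of these $d$ diagonals is a clique, so this is a partition of the vertex set into $d$ cliques and an independent set can contain at most one vertex from each; hence $\alpha(TQ_{m,n})\le d$. (The slope-$-1$ diagonals give a second such cover, but one suffices.)

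The real content is the lower bound: producing an independent set of size exactly $d$. My first attempt would be an arithmetic-progression placement, namely the $d$ queens $\{(iu\bmod m,\;iv\bmod n):0\le i\le d-1\}$ for well-chosen integers $u,v$. With the coordinates above, the requirement that these cells occupy distinct rows and distinct columns becomes a condition on the orders of $u$ modulo $m$ and of $v$ modulo $n$, while the requirement that no two of them lie on a common slope-$\pm1$ diagonal reduces, via the coset description of the diagonals, to asking that $u-v$ and $u+v$ both be coprime to $d$. When $\gcd(d,6)=1$ all of these can be arranged simultaneously --- for instance with $u=1$ and $v=2$ --- which already settles a large family of cases; in general, though, a purely linear placement need not exist, and the construction has to be adjusted by a case analysis on the small prime divisors of $m$ and $n$, either assembling the independent set from several shorter progressions or perturbing it by an amount governed by the relevant $2$-adic and $3$-adic valuations. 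This case analysis is the step I expect to be the main obstacle, and it is precisely the work carried out in \cite{non_square_tori}, whose conclusion I would ultimately invoke.
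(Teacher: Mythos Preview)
The paper does not prove this lemma at all: it is stated with a bare citation to \cite{non_square_tori} and nothing more. So there is no ``paper's own proof'' to compare against; you have already gone further than the paper does.

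Your upper bound is correct and complete. The slope-$+1$ toroidal diagonals are precisely the cosets of $\langle (1,1)\rangle$ in $\mathbb{Z}_m\times\mathbb{Z}_n$, each of size $\operatorname{lcm}(m,n)$, so there are $mn/\operatorname{lcm}(m,n)=\gcd(m,n)$ of them, they partition the vertex set, and each is a clique; hence $\alpha(TQ_{m,n})\le d$. For the lower bound, your arithmetic-progression idea with $u=1$, $v=2$ does work when $\gcd(d,6)=1$: the coset descriptions you give reduce the diagonal conditions to $d\mid(i-j)$ and $d\mid 3(i-j)$, and the row and column conditions are handled because $d\mid n$ with $d\le n$ forces $2i\not\equiv 2j\pmod n$ for $0\le i<j<d$. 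You are also right that the remaining cases (small primes dividing $d$) require a more delicate construction, and that this is exactly what Cairns carries out; invoking \cite{non_square_tori} for those cases is precisely what the paper itself does for the entire statement.
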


    A \emph{cut} on a graph is a partition of the vertices into two sets $(U, U^c)$. Given the cut $(U,U^c)$, the \emph{cutset} is the set of edges $\{(v_i,v_j)|v_i\in U, v_j\in U^c\}$ and is denoted $E(U,U^c)$.

\subsection{Chip-firing}
We now describe divisor theory on graphs, as well as the language of chip-firing games.  For a thorough treatment, we refer the reader to \cite{divisors_and_sandpiles}.  Given a graph $G$, a \emph{divisor} is an integer vector in $\mathbb{Z}^{V(G)}$, or alternatively, a $\mathbb{Z}-$linear combination of the vertices of $G$.  Divisors are often described as stacks of poker chips on each vertex of a graph. A negative number associated to a vertex is interpreted as that vertex having a debt of chips.  Given a vertex subset $U\subset V(G)$, we let $\mathbb{1}_U$ denote the divisor that places $1$ chip on each vertex in $U$, and $0$  chips elsewhere.

The name \emph{chip-firing} comes from the action of firing vertices. We can fire the vertex $v$ by transferring one chip along each edge connected to $v$ in the direction away from $v$. Given a divisor $D$, we may obtain some other divisor $D'$ by firing some sequence of vertices.  This process is reversible; for instance, to undo a firing at \(v\), we may fire all vertices in \(\{v\}^C\).  Differing by a sequence of chip-firing move thus forms an equivalence relation on the set of all divisors on \(G\), and we say \(D\) and \(D'\) are \emph{equivalent} if they are some sequence of chip-firing moves apart.  We let $[D]$ denote the equivalence class of a divisor $D$.

Note that the order in which we fire multiple vertices will not change the resulting divisor. Thus it is often useful to talk about firing a subset \(U\) of vertices simultaneously, which simply means to fire all vertices in \(U\) in any order.  Firing the subset \(U\) is called \emph{legal} if no vertices of \(U\) are in debt, and no vertices of \(U\) go into debt from the subset-firing move.

Given a divisor $D$ on a graph $G$, the \emph{degree} of $D$ is the sum of all integer coordinates of the vector $D$. Alternatively the degree of a divisor can be thought of as the sum of all chips and chip debts that exist on the graph.   A divisor $D$ is \emph{effective} if no coordinates of the vector are negative. In other words, a divisor is effective if no vertex has a debt.
   
\begin{example}
   \begin{figure}[h]

\begin{tikzpicture}
\filldraw (0,1) circle (2pt);
\node at (0,1.3) {2};
\filldraw (1,0) circle (2pt);
\filldraw (1,2) circle (2pt);
\node at (1,2.3) {-1};
\node at (1.5,1) {$D$};
\node at (1,-0.3) {-1};
\node at (3,1.3) {2};
\filldraw (2,0) circle (2pt);
\filldraw (2,2) circle (2pt);
\filldraw (3,1) circle (2pt);
\draw (0,1)--(1,0);
\draw (0,1)--(1,2);
\draw (1,0)--(1,1);
\draw (1,0)--(2,0);
\draw (1,1)--(1,2);
\draw (1,2)--(2,2);
\draw (2,0)--(2,1);
\draw (2,0)--(3,1);
\draw (2,1)--(2,2);
\draw (2,2)--(3,1);
\filldraw (4,1) circle (2pt);
\filldraw (5,0) circle (2pt);
\filldraw (5,2) circle (2pt);
\filldraw (6,0) circle (2pt);
\filldraw (6,2) circle (2pt);
\filldraw (7,1) circle (2pt);
\node at (6,-0.3) {1};
\node at (6,2.3) {1};
\node at (5.5,1) {$D'$};
\draw (4,1)--(5,0);
\draw (4,1)--(5,2);
\draw (5,0)--(5,1);
\draw (5,0)--(6,0);
\draw (5,1)--(5,2);
\draw (5,2)--(6,2);
\draw (6,0)--(6,1);
\draw (6,0)--(7,1);
\draw (6,1)--(6,2);
\draw (6,2)--(7,1);
\end{tikzpicture}
\caption{Two equivalent divisors $D$ and $D'$;  $D'$ is obtained from $D$ by firing both vertices which had two chips.}
\label{equivalent divisors}
\end{figure}
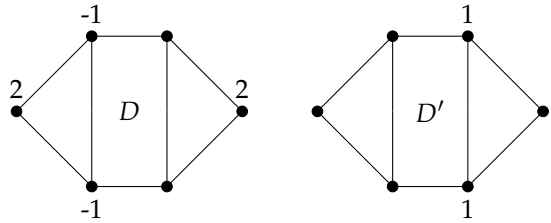
In Figure \ref{equivalent divisors}, we see two equivalent divisors of degree $2$. The divisor $D'$ is effective while $D$ is not.
\end{example}

We now discuss \(q\)-reduced divisors.  Given a vertex \(q\in V(G)\), we say that \(D\) is \emph{\(q\)-reduced} if \(D(v)\geq 0\) for all \(v\neq q\), and there exists no subset \(U\subset V(G)-\{q\}\) that can be legally fired. Every divisor \(D\) is equivalent to a unique \(q\)-reduced divisor, denoted $D_q$ \cite[Theorem 3.7]{divisors_and_sandpiles}.  Given a divisor \(D\) with \(D(v)\geq 0\) for all \(v\neq q\), we can test whether \(D\) is \(q\)-reduced through \emph{Dhar's burning algorithm} \cite{dhar}. This algorithm allows a ``fire'' to spread throughout the graph as follows:
\begin{itemize}
\item[(1)] Let \(q\) burn.
\item[(2)] Any edge incident to a burning vertex will also burn.
\item[(3)] Any vertex \(v\neq q\) with more than \(D(v)\) burning edges incident to it will burn.
\end{itemize}
This process will stabilize either with the whole graph burned, or with some collection \(U\) of unburned vertices.  Then \(D\) is \(q\)-reduced if and only if the whole graph burns.  In fact, if the set of unburnt vertices \(U\) is nonempty, then \(U\) will give a legal subset firing move.  Firing \(U\) and then repeating the process until the whole graph burns will eventually yield \(D_q\).

We present the following lemma to help understand how the burning process from Dhar's algorithm manifests in graphs with many subgraphs isomorphic to a complete graph \(K_n\), which has $n$ vertices all adjacent to one another.

\begin{lemma}\label{lemma:burning_complete} Let \(H\) be a subgraph of \(G\) with \(H\) isomorphic to \(K_n\) for some \(n\geq 1\).  Suppose that \(D\) is an effective divisor on \(G\) with  \(D(q)=0\) for some \(q\in V(H)\), such that \(H\) has at most \(n-1\) chips.  If Dhar's burning process is run from \(q\), and all of \(H\) does not burn, then either some \(v\in V(H)\) satisfies \(D(v)\geq n-1\), or every \(v\in V(H)-\{q\}\) satisfies \(D(v)\geq 1\).  The same holds if \(n\geq 5\) and we have at most \(n\) chips on \(H\).
\end{lemma}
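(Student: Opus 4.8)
The plan is to translate survival of a vertex under Dhar's burning algorithm into a lower bound on its chip count, exploiting the fact that $H\cong K_n$ forces every unburnt vertex to be adjacent to many burnt ones.

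First I would introduce the set $W\subseteq V(H)$ of vertices of $H$ that remain unburnt when the burning process stabilizes, and put $k=|W|$. Since $q$ burns at step (1), we have $q\notin W$; since $H$ does not burn completely by hypothesis, $W\neq\emptyset$; and $W\neq V(H)$, so $1\le k\le n-1$, with $W=V(H)\setminus\{q\}$ forced when $k=n-1$. The crucial observation is that for any $v\in W$, completeness of $H$ means $v$ is adjacent in $H$ to all $n-1$ other vertices of $H$, of which exactly $k-1$ lie in $W$ and the other $n-k$ are burnt; every edge joining $v$ to a burnt vertex is itself burnt, so $v$ is incident to at least $n-k$ burnt edges. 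Because $v$ never burns, step (3) of the algorithm gives $D(v)\ge n-k$. Summing over $v\in W$ and using effectivity together with the hypothesis that $H$ carries at most $n-1$ chips yields $k(n-k)\le\sum_{v\in W}D(v)\le n-1$.

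The proof then finishes with an elementary case analysis on $k$. On the range $2\le k\le n-2$ the quantity $k(n-k)$ is minimized at the endpoints, where it equals $2(n-2)$, and $2(n-2)\ge n>n-1$ for all $n\ge 4$; this contradicts $k(n-k)\le n-1$, so $k\in\{1,n-1\}$. If $k=1$, the unique vertex $v\in W$ satisfies $D(v)\ge n-1$, the first alternative in the conclusion. If $k=n-1$, then $W=V(H)\setminus\{q\}$ and each of its vertices satisfies $D(v)\ge n-k=1$, the second alternative. For the final sentence of the lemma I would run the identical argument with ``$\le n$'' in place of ``$\le n-1$'': one gets $k(n-k)\le n$, and since $2(n-2)>n$ whenever $n\ge 5$, the range $2\le k\le n-2$ is again excluded, leaving the same two cases and the same conclusions.

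I do not expect a genuine obstacle; the point that needs care is that the bound ``at least $n-k$ burnt edges at $v$'' counts only edges internal to $H$, so any extra edges of $G$ incident to $V(H)$ can only increase the number of burnt edges and hence never weaken the inequality $D(v)\ge n-k$ — and it is essential that $q$, the one vertex of $H$ guaranteed to burn, is excluded from $W$, since that is exactly what makes $n-k$ (rather than a smaller number) the correct count at each $v\in W$.
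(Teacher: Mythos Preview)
Your argument is correct and follows essentially the same route as the paper: let $k$ be the number of unburnt vertices of $H$, observe that each unburnt vertex needs at least $n-k$ chips, use concavity of $k(n-k)$ to rule out $2\le k\le n-2$ via the bound $2(n-2)>n-1$ (respectively $2(n-2)>n$ when $n\ge 5$), and read off the two surviving cases $k=1$ and $k=n-1$.
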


\begin{proof} This claim holds immediately if $n\leq 3$, due to the limited number of ways chips could be placed.  Now assume $n\geq 4$.  Let \(U\) be the set of unburned vertices in \(H\), with \(k=|U|\). Every vertex in \(U\) has at least \(n-k\) chips on it, since there are that many burning edges coming from other vertices in \(H\).  Thus there are at least \(k(n-k)\) chips on \(H\).  Suppose \(2\leq k\leq n-2\).  Since \(k(n-k)\) is concave down as a function of \(k\), this is minimized when \(k=2\) or when \(k=n-2\), both giving a value of \(2n-4\).  Thus there are at least \(2n-4\) chips on \(H\), a contradiction since \(2n-4>n-1\) for \(n\geq 4\).  We conclude that \(k=1\) or \(k=n-1\), yielding the required configuration of chips.

For \(n\geq 5\), the same argument goes through with \(n\) chips on \(H\), since \(2n-4>n\) for \(n\geq 5\).
\end{proof}

\subsection{Rank and gonality}
We now define the \emph{rank} \(r(D)\) of a divisor \(D\). If $D$ is not equivalent to any effective divisor, then \(r(D)=-1\).  Otherwise, for \(k\geq 0\), we define \(r(D)\geq k\) if for any effective divisor $E$ of degree $k$, $D-E$ is equivalent to some effective divisor.  In chip-firing language, \(r(D)\) is the maximum amount of added debt \(D\) can remove through chip-firing moves, regardless of where that debt is placed.  Using language of rank, we now present the key graph invariant of this paper.

\begin{definition}
    Given a graph $G$, the \emph{gonality} of $G$, written $\textit{gon}(G)$, is the minimum degree of a positive rank divisor on $G$. 
\end{definition}

In the language of chip-firing games, the gonality is the minimum number of chips we can place on the graph such that, wherever \(-1\) debt is added, we can eliminate it using chip-firing moves.  An equivalent characterization is that \(\gon(G)\) is the minimum degree of an effective divisor \(D\) such that \(D_q(q)\geq 1\) for all \(q\in V(G)\).

\begin{example}
We claim that the graph \(G\) in Figure \ref{equivalent divisors} has \(\gon(G)=2\).  Indeed, the divisor \(D'\) has positive rank, as can be checked by computing \(D_q\) for all \(q\in V(G)\), implying \(\gon(G)\leq 2\).  To see \(\gon(G)>1\), note that any effective divisor \(D\) of degree \(1\) will be \(q\)-reduced for all \(q\in V(G)\) (including those with \(D(q)=0\)), as can be checked by Dhar's burning algorithm.
\end{example}

This example highlights the sorts of strategies that can be used for bounding gonality above and below:  upper bounds come from divisors of positive rank, and lower bounds can be found using arguments based on \(q\)-reduced divisors and Dhar's burning algorithm. We have a general upper bound in the following lemma.

\begin{lemma}[Proposition 3.1 in \cite{gonality_of_random_graphs}]\label{lemma:upper_bound_alpha} For a simple graph \(G\), we have \(\gon(G)\leq |V(G)|-\alpha(G).\)
\end{lemma}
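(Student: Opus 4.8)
The plan is to prove this general upper bound by exhibiting, for any simple graph $G$ and any maximum independent set $S$, a divisor of degree $|V(G)| - \alpha(G)$ that has positive rank. The natural candidate is $D = \mathbb{1}_{S^c}$, the divisor placing one chip on each vertex outside $S$ and no chips on the vertices of $S$; this has degree exactly $|V(G)| - |S| = |V(G)| - \alpha(G)$. It remains to show $r(D) \geq 1$, i.e.\ that for every vertex $q \in V(G)$, the divisor $D - q$ is equivalent to an effective divisor; equivalently, that $D_q(q) \geq 1$ for all $q$.

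First I would dispose of the easy case: if $q \notin S$, then $D(q) = 1 > 0$ already, and since $D$ is effective it is trivially equivalent to an effective divisor after subtracting $q$. So the substance is the case $q \in S$. Here I would run Dhar's burning algorithm on $D$ starting from $q$. Since $S$ is independent, every neighbor of $q$ lies in $S^c$ and hence carries exactly one chip. The key observation is this: consider any vertex $v \in S$ with $v \neq q$. Because $S$ is independent, all neighbors of $v$ lie in $S^c$; and because $S$ is a \emph{maximum} independent set, $v$ must have at least one neighbor in $S^c$ (otherwise $S \cup \{v\}$ — wait, $v \in S$ already) — more precisely, every vertex of $S^c$ has a neighbor in $S$ by maximality, but I actually want to track the burning of the $S^c$ vertices: each $w \in S^c$ has only $D(w) = 1$ chip, so $w$ burns as soon as a single incident edge burns.

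The burning then propagates as follows: $q$ burns; every edge from $q$ burns; every neighbor of $q$ (all in $S^c$) then has a burning edge, hence burns since it has only one chip; then all their edges burn, so their neighbors in $S^c$ burn, and so on. Since $G$ is connected, and since the vertices of $S^c$ each need only one burning edge, the fire spreads through all of $S^c$. Finally, any $v \in S$ with $v \neq q$ has all its neighbors in $S^c$, which are now all burnt, so all edges at $v$ burn and $v$ burns (it has $D(v) = 0$ chips). Hence the entire graph burns, so $D$ is $q$-reduced. Now, a $q$-reduced divisor $D$ with the property that $D - q$ is \emph{not} equivalent to an effective divisor would have to satisfy $D(q) = 0$; but then the whole graph burning contradicts nothing by itself — I need the standard fact that if $D$ is $q$-reduced then $D_q = D$, and $D - q$ is equivalent to an effective divisor if and only if $D(q) \geq 1$. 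Since $D(q) = 0$ here, this seems to fail! The resolution: what I actually want is not that $D$ itself wins from $q$, but that the reduced form of $D$ relative to $q$ has a chip at $q$. So instead I should start the burning from $D$ but argue about $D$ being equivalent to something with a chip at each $q$.

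The cleanest fix, and the step I expect to be the main subtlety, is to instead verify positive rank directly: fix $q \in S$ and consider $D - q$, which has $-1$ at $q$ and is $0$ elsewhere on $S$. I would fire the set $S$ (every vertex of $S$ simultaneously): each $v \in S$ sends one chip along each incident edge; since $S$ is independent these chips all land in $S^c$, and each $v \in S$ loses $\deg(v)$ chips. For $v \neq q$ this makes $D(v)$ go negative — so firing all of $S$ at once is not legal. Instead I fire only $\{q\}$ — but $q$ is in debt. The correct move: since $D = \mathbb{1}_{S^c}$ and we want to beat debt at $q \in S$, note every neighbor of $q$ is in $S^c$ with one chip; fire the complement $V(G) \setminus \{q\}$, which moves one chip \emph{toward} $q$ along each of its $\deg(q)$ edges, giving $q$ a total of $\deg(q) - 1 \geq 0$ chips, at the cost that each neighbor $w$ of $q$ loses one net chip but also may gain from its own neighbors in $V(G)\setminus\{q\}$. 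Carefully tracking this — or simply citing that $\gon(G) \le |V(G)| - \alpha(G)$ is exactly Proposition 3.1 of \cite{gonality_of_random_graphs}, whose proof shows $\mathbb{1}_{S^c}$ has positive rank via Dhar — completes the argument; the honest write-up would run Dhar's algorithm on $\mathbb{1}_{S^c}$ from each $q \in S$, observe the whole graph burns as described above, conclude $\mathbb{1}_{S^c}$ is $q$-reduced, and then note that the $q$-reduced representative having $0$ at $q$ is impossible for a positive-rank divisor — hence one must instead reduce $\mathbb{1}_{S^c} - q$ and check its $q$-reduced form is effective, which the burning analysis guarantees since firing the unburnt set (empty) is vacuous and $\mathbb{1}_{S^c} - q$ reduces to an effective divisor precisely when no legal firing avoiding $q$ exists for $\mathbb{1}_{S^c}$, i.e.\ when $\mathbb{1}_{S^c}$ is $q$-reduced, which we showed.
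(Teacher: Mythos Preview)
Your proposal contains a genuine error in the Dhar's burning step. You claim that each \(w\in S^c\) ``burns as soon as a single incident edge burns'' because \(D(w)=1\); but by the rule in Dhar's algorithm a vertex burns only when the number of burning edges \emph{exceeds} \(D(w)\), so a vertex with one chip needs at least two burning edges. Starting the fire at \(q\in S\), each neighbour \(w\) of \(q\) (necessarily in \(S^c\)) sees exactly one burning edge and therefore does \emph{not} burn; the process halts immediately with unburnt set \(U=V(G)\setminus\{q\}\). Hence \(\mathbb{1}_{S^c}\) is \emph{not} \(q\)-reduced, contrary to your conclusion. This matters: had your burning analysis been correct and \(\mathbb{1}_{S^c}\) were \(q\)-reduced with value \(0\) at \(q\), that would force \(r(\mathbb{1}_{S^c})=0\), the opposite of what you want. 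Your closing sentence has the logic backwards on exactly this point.

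The fix is the move you mention in passing but do not verify: fire \(U=V(G)\setminus\{q\}\). This is precisely what the paper does. After subtracting one chip of debt at \(q\) and firing \(V(G)\setminus\{q\}\), the vertex \(q\) gains \(\deg(q)\) chips and ends with \(\deg(q)-1\ge 0\); any \(w\neq q\) nets exactly \(-|\{\text{edges }wq\}|\), so a neighbour of \(q\) (which lies in \(S^c\) since \(S\) is independent, hence had one chip) drops to \(0\), and every non-neighbour is unchanged. The result is effective, so \(r(\mathbb{1}_{S^c})\ge 1\). That one-line computation is the whole proof; the Dhar detour is unnecessary and, as written, incorrect.
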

We can explicitly describe a positive rank divisor of this degree:  choose an independent set of size \(\alpha(G)\), and place a single chip on every vertex outside of that set. If debt is placed on any unchipped vertex \(v\), firing \(V(G)-\{v\}\) will eliminate the debt.

Our strategy in the later sections for determining the gonality for queen's graphs and toroidal queen's graphs will be an upper bound coming from Lemma \ref{lemma:upper_bound_alpha} and a lower bound coming from a Dhar's burning algorithm argument. 
 To help with the latter, we introduce the following  terminology.  Let \(G\) be a graph with \(Q_{n,m}\) as a spanning subgraph. Considering an effective divisor on such a graph, we may refer to the \emph{poorest row} as the row with the smallest number of chips.
We call an effective divisor \(D\) on \(G\) \emph{row-equitable} if, among all effective divisors equivalent to \(D\), the divisor \(D\) maximizes the number of chips on its poorest row.

Our last lemma shows that distinct choices of maximum independent sets give non-equivalent divisors.

\begin{lemma}\label{lemma:pairwise_inequivalent} Let $G$ be $Q_{n,m}$ or $TQ_{n,m}$, and let $S$ and $T$ be maximum independent sets on $G$.  Then $\mathbb{1}_{S^C}\sim \mathbb{1}_{T^C}$ if and only if $S=T$.
\end{lemma}

\begin{proof} The reverse direction is immediate.  For the forward direction, assume  $\mathbb{1}_{S^C}\sim \mathbb{1}_{T^C}$.  Choose a vertex $q\in V(G)$ such that $q$ is in the same row or column as an element of $S$, and the same row or column as an element of $T$.  We will show that $\mathbb{1}_{S^C}$ and $\mathbb{1}_{T^C}$ are both $q$-reduced; by uniqueness of $q$-reduced divisors, this means that $\mathbb{1}_{S^C}\sim \mathbb{1}_{T^C}$ implies $\mathbb{1}_{S^C}= \mathbb{1}_{T^C}$ and thus $S=T$.

We will show that $\mathbb{1}_{S^C}$ is $q$-reduced; an identical argument holds for $\mathbb{1}_{T^C}$.  We will also assume that $S$ has a vertex $v$ in the same row as $q$; the argument holds just as well if it is in the same column instead.  Start Dhar's burning algorithm on $\mathbb{1}_{S^C}$ from $q$, which immediately burns the unchipped vertex $v$.  From here each vertex in that row will burn, since each has at most one chip but is incident to two burning edges.  Now we claim that once a row burns, the vertices in the rows above and below it (if they exist) will burn as well.  This is because such a vertex has two or more neighbors in the burning row, but only one chip.  Thus once one row burns, the rows above and below burn as well.  This spreads the fire throughout the whole graph until every vertex is burned, implying $\mathbb{1}_{S^C}$ is $q$-reduced.  This completes the proof.
\end{proof}

\section{Gonality of the queen's graph}
\label{section:queens}

Throughout this section, we assume that $m\geq n\geq 2$.  We start with the following lemma, which gives us Theorem \ref{theorem:queens} for a few small queen's graphs, namely $Q_{2,2}$ and $Q_{3,3}$.  These are the only queen's graphs that do not satisfy $\alpha(Q_{n,m})=n$, so it is convenient to handle them separately

\begin{lemma}\label{lemma:q22_q33} We have that $\gon(Q_{2,2})=3$ and $\gon(Q_{3,3})=7$.  Morever, for both these graphs, there is a one-to-one correspondence between divisor classes of rank $1$ of degree equal to gonality and  maximum independent sets.
\end{lemma}

\begin{proof}

Consider the graph \(Q_{2,2}\).  This is simply the complete graph \(K_4\), which has gonality \(3\). (In general, $\gon(K_n)=n-1$ for $n\geq 2$; this can be viewed as a corollary of Lemma \ref{lemma:burning_complete}, or can be proved by considering treewidth as in \cite[Example 4.3]{debruyn2014treewidth}.) By Lemma \ref{lemma:burning_complete}, we can completely characterize the degree \(3\), rank \(1\) effective divisors as ones that place \(1\) chip on three vertices, or \(3\) chips on one vertex; and in fact every divisor of the latter form is equivalent to a divisor of the former form, by firing the vertex with \(3\) chips.  These are precisely the divisors of the form $\mathbb{1}_{S^C}$ for a maximum independent set $S$ (i.e., a set consisting of a single vertex), as desired.  These divisors  $\mathbb{1}_{S^C}$ are pairwise non-equivalent by Lemma \ref{lemma:pairwise_inequivalent}, giving us the desired one-to-one correspondence.

Now consider the graph \(Q_{3,3}\).  Note that each corner vertex has degree \(6\); that the other boundary vertices also have degree \(6\); and that the central vertex has degree \(8\).  Thus we have that the minimum degree \(\delta(Q_{3,3})\) is at least as large as \(\lfloor |V(Q_{3,3})|/2\rfloor +1=5\).  This allows us to apply \cite[Corollary 3.2]{Ech} to conclude that \(\gon(Q_{3,3})=|V(Q_{3,3})|-\alpha(Q_{3,3})=9-2=7\).

We now classify all degree \(7\) divisors of positive rank, up to equivalence.  Let \(D\) be a row-equitable divisor of degree \(7\) and positive rank on \(Q_{3,3}\).  Choose a vertex \(q\) with \(D(q)=0\), and run Dhar's burning algorithm from \(q\); since \(r(D)>0\), we know there must be an unburned set of vertices \(U\), with \(1\leq |U|\leq 8\). We remark that \(|U|=1\) is impossible: it would require a single vertex to have at least \(6\) chips, which cannot happen in a single row (much less a single vertex) by the row-equitable assumption.

We know that \(|E(U,U^C)|\leq 7\), since firing \(U\) must be a legal firing move.  We have \(|E(U,U^C)|\geq 6|U|-2{|U|\choose 2}=7|U|-|U|^2\), since each vertex has degree at least \(6\), and we have to subtract off twice any edges that may be internal to \(U\).
A symmetric argument shows that \(|E(U,U^C)|=|E(U^C,U)|\geq 6|U^C|-2{|U^C|\choose 2}=7|U^C|-|U^C|^2\).
Note that if \(|U|=2\) or \(|U|=5\), the first inequality  gives $|E(U,U^C)|\geq10$; the same holds for \(2\leq |U|\leq 5\) since the expression \(7|U|-|U|^2\) is concave down. This is a impossible since $|E(U,U^C)|\leq 7$.
Similarly, when \(|U|=6\) or \(|U|=7\), the second inequality gives a lower bound of \(12\) and \(10\) respectively on $|E(U,U^C)|$, which is again impossible.  Thus we must have $|U|=8$, so the burning process must stop when one vertex burns. This holds for any choice of \(q\) with \(D(q)=0\), meaning that the set of unchipped vertices is an independent set. Thus \(D=\mathbb{1}_{S^C}\) for some independent set \(S\), which must be maximum since \(\deg(D)=7=9-\alpha(Q_{3,3})\).

Therefore, up to equivalence, every rank \(1\) degree \(7\) divisor on \(Q_{3,3}\) is of the form \(\mathbb{1}_{S^C}\) for some maximum independent set \(S=\{u,v\}\).  These divisors are pairwise inequivalent by Lemma \ref{lemma:pairwise_inequivalent}.
\end{proof}

We now present the key lemma for lower bounding the gonality of queen's graphs.

\begin{lemma}\label{lemma:deg(D)_lower_bound} Let \(G\) be a graph with the \(n\times m\) queen's graph \(Q_{n,m}\) as a spanning subgraph.  Let \(D\) be a row-equitable effective divisor of positive rank on \(G\) with \(\deg(D)\leq nm-1\), and let \(q\) be a vertex with \(D(q)=0\). Suppose that when running Dhar's burning algorithm on \(D\) starting from \(q\), a poorest row burns. Then \(\deg(D)\geq (m-1)n+2\).
\end{lemma}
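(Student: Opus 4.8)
The plan is to run Dhar's burning algorithm from $q$ and exploit the hypothesis that a poorest row $R$ burns completely, then propagate the burning argument row by row to show that very few vertices can remain unburnt — so few, in fact, that legality of firing the unburnt set forces $\deg(D)$ to be large. First I would observe that since $D$ has positive rank and $D(q) = 0$, the burning process cannot consume the whole graph, so there is a nonempty unburnt set $U \subseteq V(G) - \{q\}$ giving a legal subset firing move, hence $|E(U, U^c)| \leq \deg(D) \leq mn - 1$. The key structural fact to extract is that $U$ must be contained in very few rows: because each row of $Q_{m,n}$ is a clique $K_m$ and there are many diagonal/column edges between rows, a set $U$ meeting two or more rows would have an enormous cutset. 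I would make this precise using a counting argument in the spirit of Lemma~\ref{lemma:burning_complete} and the $Q_{3,3}$ analysis in Example~\ref{example:q22_q33}, bounding $|E(U, U^c)|$ from below by (edges within rows leaving $U$) plus (column and diagonal edges leaving $U$).

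**The core counting step.**

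The central computation: let $R$ be the poorest row, which burns entirely by hypothesis, so $U$ is disjoint from $R$. Suppose $U$ occupies rows other than $R$; I would argue that in any row $R'$ that $U$ meets, if $U$ contains $k'$ of the $m$ vertices of $R'$, then those $k'$ vertices contribute at least $k'(m - k')$ edges to $U^c$ just within $R'$ (the clique edges), plus a controlled number of column edges down to $R$ (which is fully burnt, hence in $U^c$) — at least one column edge per vertex of $U$ if $R$ and $R'$ are distinct rows, and more from diagonals. Summing over all rows $U$ meets and using that $\sum k'(m-k')$ is minimized at the extremes, I expect to derive $|E(U, U^c)| \geq$ something like $n(m-1) + (\text{correction})$ unless $U$ is a single vertex or is "almost all" of a single row. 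I would then rule out the degenerate small cases: $|U| = 1$ is impossible by the row-equitable assumption exactly as in the $Q_{3,3}$ example (a single vertex cannot carry enough chips to survive burning while being in the poorest-or-any row of a row-equitable divisor), and similarly $|U|$ being a near-complete row forces concentration of chips contradicting row-equitability or the degree bound. Combining, $\deg(D) \geq |E(U,U^c)| \geq (m-1)n + 2$.

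**Where the difficulty lies.**

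The main obstacle I anticipate is getting the constant right — pushing the bound all the way to $(m-1)n + 2$ rather than merely $(m-1)n$ or $(m-1)n+1$. The "$+2$" must come from carefully accounting for the diagonal edges (slope $\pm 1$) between $U$ and the burnt poorest row, and from the fact that $q$ itself contributes a burnt vertex in its own row forcing at least one extra boundary edge; I would need to track small-case behavior at the corners and edges of the board where vertices have fewer diagonal neighbors, and possibly split into cases according to whether $q$ lies in the poorest row or not, and whether $U$ meets the row containing $q$. A secondary subtlety is handling $G$ strictly larger than $Q_{m,n}$: since $Q_{m,n}$ is only a spanning subgraph, $G$ has at least as many edges, so all lower bounds on $|E(U,U^c)|$ computed in $Q_{m,n}$ remain valid in $G$ — I would state this reduction explicitly at the outset so the rest of the argument can work purely inside $Q_{m,n}$. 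Once the cutset lower bound is established uniformly, the conclusion $\deg(D) \geq (m-1)n + 2$ follows immediately from legality of firing $U$.
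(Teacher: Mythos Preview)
Your proposal has a real gap: you never establish the structural fact that drives the paper's proof, namely that \emph{some row must remain entirely unburnt}. The paper's argument is not a direct cutset estimate of the kind you sketch. Instead, it fires the unburnt set $U$ to obtain $D'$ and then does a case analysis on the poorest row $r$ of $D'$: if $r$ has $0$, $1$, $m-1$, or between $2$ and $m-2$ vertices in $U$, each case contradicts row-equitability of $D$ (via the bound $j(m-j)$ on chips transferred within a row, together with degree counts). This forces $U$ to contain a full row. Only then does the cutset bound kick in: with both $U$ and $U^c$ containing entire rows, one invokes the rook-graph cut bound $(n-1)m$ for non-diagonal edges and adds at least $2(m-n+1)$ diagonal edges between the two full rows to reach $(m-1)n+2$.

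Your plan to bound $|E(U,U^c)|$ directly from $\sum k'(m-k')$ plus column/diagonal edges to the burnt poorest row does not obviously reach $(m-1)n+2$. For instance, if $|U|=1$ the cutset is just $\deg(v)$, which for a corner vertex is $m+2n-3$, far short of $(m-1)n+2$ once $n\geq 4$. You say row-equitability rules out $|U|=1$ ``exactly as in the $Q_{3,3}$ example,'' but that example worked because $\deg(D)=7$ with three rows forced every row to be poor; in general a single vertex with $\deg(v)$ chips is not immediately incompatible with row-equitability, and the paper's treatment of this case (inside the ``$r$ has one vertex in $U$'' subcase) is considerably more delicate than your sketch suggests. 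The same issue recurs for other small or spread-out $U$. The missing idea is precisely the use of row-equitability \emph{after} firing $U$, to compare poorest rows of $D$ and $D'$ and thereby force $U$ to swallow a whole row.
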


\begin{proof}  Since \(r(D)>0\), we know that the whole graph does not burn.  We will argue that some row is completely unburned.  Suppose for the sake of contradiction that every row has at least one vertex burn. We can fire our unburnt vertices $U$ to obtain a new effective divisor $D'$.  We will consider the poorest row \(r\) under this new divisor.

Suppose \(r\) has no vertices in \(U\).  Then \(r\) gained chips, meaning that the poorest row of \(D'\) has more chips than the poorest row of \(D\), a contradiction to our choice of \(D\).

Suppose \(r\) has only one vertex in \(U\), say \(v\in U\).  That row must have at least \(m-1\) chips in \(D'\), received from firing \(v\); and as the poorest row it must have at most \(m-1\) chips, so \(r\) has exactly \(m-1\) chips in \(D'\).  It follows that \(U=\{v\}\):  otherwise \(r\) would have gained an additional chip from a vertex outside of the row, giving \(r\) more than \(m-1\) chips.  It follows that \(D(v)\geq \deg(v)>(n-1)+(m-1)\) chips, one to donate to each neighbor; the strict inequality comes from the fact that \(v\) has at least one diagonal neighbor in \(G\).  Since \(D\) maximized the number of chips in the poorest row, and \(D'\) has \(m-1\) chips in the poorest row, so too must \(D\).  It follows that every row in \(D\) had at least \(m-1\) chips, so the most chips a single row could contain in \(D\) is
\[(nm-1)-(n-1)(m-1)=n+m-2.\]
But \(v\) contains strictly more chips, a contradiction. A similar argument holds if \(r\) has \(m-1\) vertices in \(U\), with each of those \(m-1\) vertices requiring more than \(1+(n-1)=n\) vertices to donate to their neighbors in \(U^C\), again meaning \(r\) has more chips in \(D\) than is possible.

We are now left to find a contradiction in the case that the poorest row \(r\) of \(D'\) has between \(2\) and \(m-2\) vertices in \(U\).
If row $k$ has $j$ vertices in $U$, then firing $U$ will transfer $j(m-j)$ chips from the $j$ vertices in $U$ to the other $m-j$ vertices in row $k$. Since $D'$ is effective, row $k$ will have at least $j(m-j)$ chips in $D'$. For every row that is not totally burned, we have $1< j< m-1$, implying that $j(m-j)> m-1$. Thus \(r\) could not be the poorest row, giving us our final contradiction.

We now know that at least one row remains unburned. We then consider the cut set $E(U,U^c)$. Since both $U$ and $U^c$ contain an entire row, they both contain at least $m\geq n-1$ vertices and by \cite[Theorem 5.1]{rooks_graph_gonality}, $E(U, U^c)$ has at least $(n-1)m$ non-diagonal cut edges. The first and last rows of a queen's graph share $2(m-n+1)$ diagonal edges between them. Any other two rows will share more diagonal edges and thus,
     \[|E(U,U^c)|\geq (n-1)m+2(m-n+1)\]
     \[=nm-m+2m-2n+2\]
    \[=nm+m-2n+2\]
    \[\geq nm-n+2\]
    \[=(m-1)n+2.\]
Since \(\deg(D)\geq E(U,U^c)\), this completes the proof.
\end{proof}

We now prove a proposition that does most of the work towards proving Theorem \ref{theorem:queens}.

\begin{proposition}\label{prop:independent_sets} Let \(D\) be a row-equitable divisor on \(Q_{n,m}\) with \(\deg(D)=\gon(Q_{n,m})\) and \(r(D)>0\). Then \(D=\mathbb{1}_{S^C}\) for some maximal independent set \(S\).
\end{proposition}

\begin{proof} We have already seen this result holds when \(n=m=2\) and when \(n=m=3\), so we will assume we are in neither case.  This means that \(\alpha(Q_{n,m})=n\).

Suppose for the sake of contradiction that \(D\) is a row-equitable divisor with \(\deg(D)=\gon(Q_{n,m})\) and \(r(D)>0\), such that \(D\) is not of this form.  It will suffice to show that there exists a vertex \(q\) in the poorest row of \(D\) with $D(q)=0$ such that running Dhar's on \(q\) makes that row burn.  This allows us to apply Lemmas \ref{lemma:upper_bound_alpha} and \ref{lemma:deg(D)_lower_bound} to conclude 
\[(m-1)n+2\leq \deg(D)=\gon(Q_{n,m})\leq nm-\alpha(Q_{n,m})=(m-1)n,\]
a contradiction.

If the poorest row in \(D\) has at most \(m-2\) chips, then choosing any \(q\) in that row with \(D(q)=0\) causes that row to burn by Lemma \ref{lemma:burning_complete}.  Otherwise the poorest row has \(m-1\) chips, meaning that since \(\deg(D)=\gon(Q_{n,m})\leq (m-1)n\), every row has exactly \(m-1\) chips. Thus, any row can serve as the poorest row, and so it suffices to show that a single row burns.

We will assume that every row has either \(1\) chip on every vertex except one; or all \(m-1\) chips on a single vertex.  (By Lemma \ref{lemma:burning_complete} these are the only configurations that avoid a copy of \(K_m\) burning immediately with \(m-1\) chips.) We split into two cases.
\begin{itemize}
    \item[(1)] Suppose some row \(r_i\) has a vertex \(u\) with \(m-1\) chips. Run Dhar's burning algorithm on some \(q\neq u\) in the same row; then all of \(r_i-u\) burns. Choose an adjacent row \(r_k\). Each vertex in \(r_k\) is adjacent to at least two vertices in \(r_i\), so any unchipped vertex (of which there is at least one) in \(r_k\) will burn.  From here, any vertex with only one chip will burn, as it will have at least two burning neighbors (one from \(r_k\) and one from \(r_i\)).  If all of \(r_k\) burns, we are done.  If it does not burn internally, then some vertex has \(m-1\) chips; but this has \(m-1\) burning edges from \(r_k\) and at least one burning edge from \(r_i\), so that vertex burns as well, and we have a burnt row.
\item[(2)] Suppose every row has \(m-1\) chips on different vertices, leaving exactly one vertex per row unchipped. 
 In particular, we have that every vertex in the graph has at most one chip.  By our supposition on \(D\), we may choose \(q,v\) adjacent in \(Q_{n,m}\) such that \(D(q)=D(v)=0\).  Run Dhar's burning algorithm from \(q\), so that \(v\) also burns.  If \(q\) and \(v\) are adjacent along a diagonal edge, then \(q\) has another neighbor in \(v\)'s row, which must have \(1\) chip and will therefore burn from \(q\) and \(v\); from here that row burns entirely.  If \(q\) and \(v\) are adjacent along a vertical edge, then the column \(c_\ell\) containing \(q\) and \(v\) will burn, as it is a copy of \(K_n\) with two burning vertices with at most \(1\) chip per vertex. From here even a single additional vertex burning makes a whole row burn; but any vertex in a column neighboring \(c_\ell\) will burn, as it has at most \(1\) chip and at least two burning edges from \(c_\ell\).
\end{itemize}

In all cases, a row burns, completing the proof.
\end{proof}

\begin{proof}[Proof of Theorem \ref{theorem:queens}] We may assume our graph is not $Q_{2,2}$ or $Q_{3,3}$, as these were handled in Lemma \ref{lemma:q22_q33}. Let \(D\) be a divisor on \(Q_{n,m}\) with \(\deg(D)=\gon(Q_{n,m})\) and \(r(D)>0\).  By Proposition \ref{prop:independent_sets}, \(D\) is equivalent to a divisor of the form \(\mathbb{1}_{S^C}\) for some maximal independent set \(S\), meaning that \(\gon(Q_{n,m})=\deg(D)=|S^C|=nm-\alpha(Q_{n,m})\).

 For the second claim, given a maximum independent set \(S\) on \(Q_{n,m}\), map \(S\) to the divisor class \([D]\) where \(D=\mathbb{1}_{S^C}\).  This is a positive rank divisor of degree \(\gon(Q_{n,m})\).   The fact that this map is onto the set of all positive rank divisor classes with degree equal to gonality follows from Proposition \ref{prop:independent_sets}. The fact that this map is one-to-one follows from Lemma \ref{lemma:pairwise_inequivalent}.
 \end{proof}

It follows from Theorem \ref{theorem:queens} that \(G=Q_{n,m}\) has as many pairwise inequivalent divisors \(D\) with \(r(D)>0\) and \(\deg(D)=\gon(G)\) as it has independent sets. This number is given by OEIS sequence A000170 \cite{oeis}. For instance, the graph \(Q_{8,8}\) has \(92\) pairwise non-equivalent divisors of degree \(56\) and positive rank.

 \section{Toroidal queen's graphs}
 \label{section:toroidal_queens}

As in the previous section we assume $m\geq n\geq 2$.  We begin with the following lemma, which plays a similar role to Lemma \ref{lemma:deg(D)_lower_bound} for toroidal queens' graphs.

 \begin{lemma}\label{lemma:deg(D)_bound_toroidal} Let \(G=TQ_{n,m}\), let \(D\) be a row-equitable effective divisor of positive rank on \(G\) with \(\deg(D)\leq nm-1\), and let \(q\) be a vertex with \(D(q)=0\). When running Dhar's burning algorithm on \(D\) starting from \(q\), it is impossible for a poorest row to burn.
 \end{lemma}

 \begin{proof} Suppose for the sake of contradiction that a poorest row does burn.  Since $r(D)>0$, we know the whole graph does not burn. Let $U$ be the set of unburnt vertices. Following the proof of Lemma \ref{lemma:deg(D)_lower_bound} until the analysis of \(E(U,U^C)\), we find that at least one row remains unburned.  In the remainder of the proof, we will argue that \(|E(U,U^C)|\geq nm\), yielding a contradiction since $\deg(D)\geq |E(U,U^C)|$ and $\deg(D)\leq nm-1$.

 Label the vertices of \(G\) with integer coordinates \((x,y)\) where \(0\leq x\leq m-1\) and \(0\leq y\leq n-1\).  Then the number of positive diagonal edges from \((0,0)\) to row \(i\) is equal to precisely the number of ordered pairs of the form \((k\mod m,k\mod n)\) where \(k\in\mathbb{Z}\) with \(k\equiv i\mod n\).  This can be viewed as starting with \((i,i)\) and adding \((n,n)\) to it repeatedly, working in the group \(\mathbb{Z}_m\oplus \mathbb{Z}_n\).  The number of ordered pairs we find is thus the order of \(n\) in \(\mathbb{Z}_m\), which is \(m/\gcd(n,m)\).  Thus \((0,0)\) is adjacent to at least \(m/\gcd(n,m)\) vertices in the \(i^{th}\) row.  The same argument holds for every vertex in the \(0^{th}\) row, so there are at least \(m^2/\gcd(n,m)\) edges connecting the \(0^{th}\) row with the \(i^{th}\) row. By the symmetry of the graph, the same holds for any pair of rows.

 Now we consider \(E(U,U^C)\).  There are at least \((n-1)m\) cut edges coming from non-diagonal edges; as well as at least \(m^2/\gcd(n,m)\) positive diagonal edges from the burned row to the unburned row.  However, some diagonal edges may duplicate a vertical one, so we must subtract the number of vertical edges between those two rows, namely \(m\).  Thus we have
 \[E(U,U^C)\geq (n-1)m+m^2/\gcd(n,m)-m=nm+m\left(\frac{m}{\gcd(n,m)}-2\right).\]
 In the case that \(m>n\) we have \(\gcd(n,m)<m\), implying that \(\frac{m}{\gcd(n,m)}\geq 2\).  Thus  \(E(U,U^C)\geq nm\), as desired.  If \(n=m\), each vertex is incident to at least \(1\) vertex in every other row along a diagonal edge, giving
 \[E(U,U^C)\geq (n-1)n+n=n^2,\]
 completing the proof.
 \end{proof}

We now prove our key proposition for toroidal queen's graphs.

\begin{proposition}
\label{prop:independent_sets_toroidal} Let \(m\geq n\), and let \(D\) be a row-equitable divisor on \(TQ_{n,m}\) with \(\deg(D)=\gon(Q_{n,m})\) and \(r(D)>0\). Then \(D=\mathbb{1}_{S^C}\) for some maximal independent set \(S\).
\end{proposition}

\begin{proof} First we note that the claim holds if \(TQ_{n,m}=K_{nm}\), so we will assume for the remainder of the proof that \(\alpha(TQ_{n,m})\geq 2\).  Note that this means that \(\gcd(n,m)>1\).

Suppose for the sake of contradiction that \(D\) is not of the form \(\mathbb{1}_{S^C}\) for a maximum independent set \(S\).  Note that \(\deg(D)\leq nm-\alpha(TQ_{n,m})\) by Lemma \ref{lemma:upper_bound_alpha}, and that there are two unchipped adjacent vertices \(q,v\in V(TQ_{n,m})\).

We will show that we may run Dhar's burning algorithm and burn a whole poorest row, leading to a contradiction by Lemma \ref{lemma:deg(D)_bound_toroidal}.  If a poorest row has at most \(m-2\) chips, we may start the algorithm at any unchipped vertex in that row and burn the row immediately by Lemma \ref{lemma:burning_complete}.  Thus we may assume for the remainder of the proof that every poorest row has \(m-1\) chips. Moreover, since \(\deg(D)\leq nm-2\), there must be at least two poorest rows.

By Lemma \ref{lemma:burning_complete}, any poorest row can be immediately burned unless it has all chips on one vertex, or all chips on different vertices; thus we may assume every poorest row has one of these forms.  Suppose for the moment that some poorest row \(r_i\) has all the chips on one vertex \(u\).  Running Dhar's algorithm from an unchipped vertex in that row burns every vertex in the row except for \(u\).  Let \(r_j\) be another poorest row.  Every vertex in \(r_j\) is adjacent to at least \(2\) vertices in \(r_i\), leading to at least two vertices in \(r_j\) burning, whence all of \(r_j\) burns except for at most one vertex (which could have \(m-1\) chips).  From here \(u\) will burn, since it has at least \(2\) neighbors in \(r_j\), one of which is burning.  Thus a poorest row burns.

We may now assume that every poorest row has a single chip on every vertex but one.  Now we turn to the unchipped adjacent vertices \(q\) and \(v\), which of necessity cannot both be in the same poorest row.  We deal with several cases; in each we run Dhar's burning algorithm from \(q\), meaning both \(q\) and \(v\) will burn.

\begin{itemize}
\item[(1)] The vertices \(q\) and \(v\) are in the same row $r_i$ (necessarily not a poorest one).  Assume for the moment $(n,m)\neq (4,4)$.
We first argue that the row $r_i$ has at most $2m-5$ chips.  Every row has at least \(m-1\) chips, meaning that the most chips a row could possibly have is \(nm-\alpha(TQ_{n,m})-(n-1)(m-1)=n+m-\alpha(TQ_{n,m})-1\).   If \(n\neq m\), this is equal to  \(n+m-\gcd(n,m)-1\).  If $\gcd(n,m)=2$, then we know $m\geq n+2$, and so \(n+m-\gcd(n,m)-1\leq 2m-5\); and if $\gcd(n,m)\geq 3$, we have  \(n+m-\gcd(n,m)-1\leq m+n-4\leq 2m-5\).  If $n=m$,  then \(m+n-\alpha(TQ_{n,m})-1\) is at most \(m+m-(m-2)-1=m+1\), which for $m\geq 6$ satisfies $m+1\leq 2m-5$.  The cases $n=m=2$ and $n=m=3$ can be ignored, as we have $\alpha(Q_{m,m})=1$ for $2\leq m\leq 3$; and for $n=m=5$ we have \(m+n-\alpha(TQ_{n,m})-1=5+5-5-1=4<5=2\cdot 5-5=2m-5\).

Now we argue, given that $r_i$ has at most $2m-5$ chips, that at least $m-1$ vertices in row $r_i$ burn.  Suppose not; then exactly $k$ burn for some $2\leq k\leq m-2$.  For the burning process to stabilize, each unburned vertex must have at least $k$ chips, so there are at least $k(m-k)$ chips on the unburned vertices.  Since the expression $k(m-k)$ is concave down in $k$, this expression is minimized at $2m-4$, when  $k\in\{2,m-2\}$.  This is more chips than are on the row, a contradiction.  Thus at least $m-1$ vertices in row $r_i$ burn.

With all of this row (sans perhaps one vertex) burning, most of a poorest row will burn (at least \(2\) burning edges coming from each vertex in \(q\)'s row), and then all of it will burn from its own internal fire.

Now we handle the case of $n=m=4$.  Since there are at least two poorest rows, each with $m-1=3$ chips, we know row $r_i$ must be adjacent to a poorest row, without loss of generality $r_{i+1}$ (as there are three other rows, and it is adjacent to two of them).  Regardless of the placement of $q$ and $v$ in $r_i$, they have two mutual neighbors in $r_{i+1}$, both of which will burn (each has at most one chip, and two incident burning edges).  The other two vertices in $r_{i+1}$ burn as well (again, each has at most one chip, now more than two incident burning edges).  Thus we have burned a poorest row.

\item[(2)] The vertices \(q\) and \(v\) are in the same column.  Running Dhar's will burn the poorest row's vertex in that column, since it has at most \(1\) chip.  At least one of \(q\) and \(v\) is not in that poorest row, and will have edges going to some of those vertices, meaning more vertices in the poorest row will burn, leading to the whole row burning.

\item[(3)] The vertices \(q\) and \(v\) are along the same diagonal.  The exact same argument from case (2) works here as well.
\end{itemize}
Having burned a poorest row in every case, we have reached our desired contradiction, completing the proof.

\end{proof}

We now prove our main result on toroidal queen's graphs.

\begin{proof}[Proof of Theorem \ref{theorem:toroidal}]Let \(D\) be a divisor on \(TQ_{n,m}\) with \(\deg(D)=\gon(Q_{n,m})\) and \(r(D)>0\).  By Proposition \ref{prop:independent_sets_toroidal}, \(D\) is equivalent to a divisor of the form \(\mathbb{1}_{S^C}\) for some maximal independent set \(S\), meaning that \(gon(TQ_{n,m})=\deg(D)=|S^C|=nm-\alpha(TQ_{n,m})\).

 For the second claim, given a maximum independent set \(S\) on \(TQ_{n,m}\), map \(S\) to the divisor class \([D]\) where \(D=\mathbb{1}_{S^C}\).  This is a rank \(1\) divisor of degree \(\gon(TQ_{n,m})\).  The argument that this gives a one-to-one and onto correspondence is identical to that from the proof of Theorem \ref{theorem:queens}.
 \end{proof}
 
\bibliographystyle{alpha}
\newcommand{\etalchar}[1]{$^{#1}$}

\end{document}